\newcommand\Dynkindots{\hbox to 2em{.\hss.\hss.}}
\def\DynkinNodeSize{1.5mm}
\def\DynkinDoubleArrowLength{3.25mm}
\def\DynkinTripleArrowLength{3.5mm}
\tikzset{
  dnode/.style={
    circle,
    inner sep=0pt,
    minimum size=\DynkinNodeSize,
    fill=white,
    draw},
  bnode/.style={
    circle,
    inner sep=0pt,
    minimum size=\DynkinNodeSize,
    fill=black,
    draw},
  middlearrow/.style={
    decoration={markings,
      mark=at position 0.7 with
      {\draw (0:0mm) -- +(+160:\DynkinDoubleArrowLength); \draw (0:0mm) -- +(-160:\DynkinDoubleArrowLength);},
    },
    postaction={decorate}
  },
  triplemiddlearrow/.style={
    decoration={markings,
      mark=at position 0.7 with
      {\draw (0:0mm) -- +(+160:\DynkinTripleArrowLength); \draw (0:0mm) -- +(-160:\DynkinTripleArrowLength);},
    },
    postaction={decorate}
  },	
  sedge/.style={
  },
  dedge/.style={
    middlearrow,
    double distance=1mm,
  },
  tedge/.style={
    triplemiddlearrow,
    double distance=1.0mm+\pgflinewidth,
    postaction={draw},
  },
}
\title[On the values of unipotent characters in type \texorpdfstring{$E_6$}{E6}]{On the values of unipotent characters of finite Chevalley groups of type \texorpdfstring{$E_6$}{E6} in characteristic 3}
\author{Jonas Hetz}
\address{IAZ - Lehrstuhl für Algebra, Universität Stuttgart, Pfaffenwaldring 57, D--70569 Stuttgart, Germany}
\email{jonas.hetz@mathematik.uni-stuttgart.de}
\newtheorem{Thm}{Theorem}[section]
\newtheorem{Prop}[Thm]{Proposition}
\newtheorem{Lm}[Thm]{Lemma}
\theoremstyle{definition}
\newtheorem{Rem}[Thm]{Remark}
\numberwithin{equation}{section}
\newcommand{\mf}{\mathbf}
\newcommand{\rom}[1]{\uppercase\expandafter{\romannumeral#1}}
\DeclareMathOperator{\id}{id}
\DeclareMathOperator{\Irr}{Irr}
\DeclareMathOperator{\Tr}{Tr}
\DeclareMathOperator{\Ind}{Ind}
\DeclareMathOperator{\CF}{CF}
\DeclareMathOperator{\rank}{rank}
\DeclareMathOperator{\SL}{SL}
\DeclareMathOperator{\GL}{GL}
\DeclareMathOperator{\PGL}{PGL}
\DeclareMathOperator{\PSO}{PSO}
\DeclareMathOperator{\End}{End}
\DeclareMathOperator{\Hom}{Hom}
\DeclareMathOperator{\Aut}{Aut}
\DeclareMathOperator{\IC}{IC}
\DeclareMathOperator{\Uch}{Uch}
\begin{document}

\begin{abstract}
\noindent Let $G$ be a finite Chevalley group. We are concerned with computing the values of the unipotent characters of $G$ by making use of Lusztig's theory of character sheaves. In this framework, one has to find the transformation between several bases for the class functions on $G$. In principle, this has been achieved by Lusztig and Shoji, but the underlying process involves some scalars which are still unknown in many cases. We shall determine these scalars in the specific case where $G$ is one of the groups $\leftidx{}E_6(q)$, $\leftidx{^2}E_6(q)$, and $q$ is a power of the bad prime $p=3$ for $E_6$, by exploiting known facts about the representation theory of the Hecke algebra associated with $G$.
\end{abstract}

\maketitle

\section{Introduction}\label{Intro}

Let $\mf G$ be a simple Chevalley group of type $E_6$ over the algebraic closure $k=\overline{\mathbb F}_p$ of the field with $p$ elements (for a prime $p$). Assume that $\mf G$ is defined over the finite subfield $\mathbb F_q$ of $k$, where $q$ is a power of $p$, so the $\mathbb F_q$-rational points on $\mf G$ constitute the corresponding finite group of Lie type $\mf G(q)=\mf G(\mathbb F_q)$. We are concerned with the problem of computing the values of the ordinary irreducible characters of $\mf G(q)$. To this end, Lusztig's work \cite{Luchars}, \cite{LuCS1}-\cite{LuCS5} is of paramount importance. On the one hand, it can be exploited to directly find the values of irreducible characters of $\mf G(q)$ on unipotent elements provided the characteristic of $k$ is good for $E_6$ (that is, $p\neq2,3$), using the results of \cite{BeSp} and the algorithm in \cite{ShArc} for computing the Green functions. On the other hand and more generally, it basically allows a reformulation of the task: In this setting, one has to determine the transformation between the irreducible characters of $\mf G(q)$ and a further basis of the class functions on $\mf G(q)$, namely the characteristic functions of suitable \enquote{character sheaves} on $\mf G$. More concretely, Lusztig \cite[13.7]{Luchars}, \cite{LuCS5} conjectured that any such characteristic function coincides up to multiplication by a root of unity with an appropriate \enquote{almost character} of $\mf G(q)$, that is, an explicitly known linear combination of the irreducible characters. This conjecture has been proven by Shoji \cite{Sh1}, \cite{Sh2} under the assumption that the center of $\mf G$ is connected. However, the exact values of the scalars relating characteristic functions of character sheaves and almost characters need to be specified. \\
Using this machinery, these scalars are determined for \enquote{cuspidal} unipotent character sheaves in \cite{Gvaluni} when $p=2$. As mentioned in \cite[6.6]{Gvaluni}, the cases where $p$ is a good prime for $E_6$ can be approached similarly (see also \cite{Lucharval}), but the argument there does not work for $p=3$. \\
The purpose of this paper is to specify the scalars relating characteristic functions of cuspidal unipotent character sheaves and the corresponding almost characters of $\mf G(q)$ when $q$ is a power of $p=3$. This will enable us to describe the values of the unipotent characters (as defined in \cite{DL}) at unipotent elements up to a few unknown signs occuring in Lusztig's algorithm for the computation of Green functions. Since the Green functions for the non-twisted group $E_6(q)$ have been computed in \cite{P} via explicit induction of characters from various subgroups, it then actually only remains to identify the missing signs for the non-twisted group $\leftidx{^2}E_6(q)$.

\subsection{Notation}\label{Not}

From now on, except in \Cref{CS}, $\mf G$ denotes a simple Chevalley group of type $E_6$ over $k=\overline{\mathbb F}_3$, an algebraic closure of the field with $3$ elements. Assume that $\mf G$ is defined over $\mathbb F_q$, $q=3^f$ for some $f\geqslant1$, and let $F\colon\mf G\rightarrow\mf G$ be the corresponding Frobenius map. Throughout, we fix an $F$-stable Borel subgroup $\mf B\subseteq\mf G$ and an $F$-stable maximal torus $\mf T\subseteq\mf B$. Let $\mathscr R=(X,R,Y,R^\vee)$ be the root datum attached to $\mf G$ (and $\mf T$), see \cite[1.2, 1.3]{GM}. Here, $X=X(\mf T)=\Hom(\mf T, \mf k^\times)$ is the group of rational characters of $\mf T$, $Y=Y(\mf T)=\Hom(\mf k^\times,\mf T)$ the group of rational cocharacters, $R\subseteq X$ are the roots and $R^\vee=\{\alpha^\vee\mid\alpha\in R\}\subseteq Y$ the coroots of $\mf G$ with respect to $\mf T$. The underlying perfect bilinear pairing is denoted $\langle\;,\;\rangle\colon X\times Y\rightarrow\mathbb Z$. Then $\mf B$ determines the positive roots $R^+\subseteq R$ and in turn the simple roots $\Pi:=\{\alpha_1,\ldots,\alpha_6\}\subseteq R^+$ and simple coroots $\Pi^\vee:=\{\alpha_1^\vee,\ldots,\alpha_6^\vee\}$. We choose the order of $\alpha_1,\ldots, \alpha_6$ in such a way that the Dynkin diagram of $E_6$ is as follows:
\begin{center}
\begin{tikzpicture}
    \draw (-1.25,-0.25) node[anchor=east]  {$E_6$};

    \node[bnode,label=above:$\alpha_1$] 		(1) at (0,0) 	{};
    \node[bnode,label=right:$\alpha_2$] 		(2) at (2,-1) 	{};
    \node[bnode,label=above:$\alpha_3$] 		(3) at (1,0) 	{};
    \node[bnode,label=above:$\alpha_4$] 		(4) at (2,0) 	{};
    \node[bnode,label=above:$\alpha_5$] 		(5) at (3,0) 	{};
    \node[bnode,label=above:$\alpha_6$] 		(6) at (4,0) 	{};
    \node[]						 				(7) at (5,0) 	{};

    \path 	(1) edge[thick, sedge] (3)
          	(3) edge[thick, sedge] (4)
          	(4) edge[thick, sedge] (5)
			(4)	edge[thick, sedge] (2)
          	(5) edge[thick, sedge] (6);
\end{tikzpicture}
\end{center}
Let $C:=(\langle\alpha_j,\alpha_i^\vee\rangle)_{1\leqslant i,j\leqslant6}$ be the corresponding Cartan matrix and $W:=N_{\mf G}(\mf T)/{\mf T}$ be the Weyl group of $\mf G$. The conjugation action of $N_{\mf G}(\mf T)$ on $\mf T$ induces an action of $W$ on $X$ which allows us to identify $W$ with a subgroup of $\Aut(X)$. Then $W$ can be viewed as a Coxeter group of type $E_6$ with Coxeter generators $S=\{s_1,\ldots, s_6\}$, where $s_i=w_{\alpha_i}\in\Aut(X)$ ($1\leqslant i\leqslant6$) are given by
\[s_i(\lambda)=\lambda-\langle\lambda,\alpha_i^\vee\rangle\alpha_i\qquad\text{for}\quad\lambda\in X.\]
Furthermore, let $\mf U:=R_u(\mf B)$ be the unipotent radical of $\mf B$. Then $\mf B$ is the semidirect product of $\mf U$ and $\mf T$ (with $\mf U$ being normal in $\mf B$). Now $\mf T$ is $F$-stable, hence so is $N_{\mf G}(\mf T)$, and $F$ induces an automorphism on $W$ which we denote by $\sigma\colon W\rightarrow W$. On the other hand, $F$ also induces a group homomorphism $\varphi\colon X\rightarrow X$, $\lambda\mapsto \lambda\circ F|_{\mf T}$, and this defines a $p$-isogeny of root data as in \cite[1.2.9]{GM}: There is a permutation $R\rightarrow R$, $\alpha\mapsto\alpha^\dagger$, such that $\varphi(\alpha^\dagger)=q\alpha$ for all $\alpha\in R$, see \cite[1.4.26]{GM}. Here, the assignment $\alpha\mapsto\alpha^\dagger$ restricts to a graph automorphism of the Dynkin diagram, so there are two possible cases: Either $\alpha\mapsto\alpha^\dagger$ is the identity (then $\mf G^F$ is the untwisted group $E_6(q)$ and $\sigma=\id_W$), or else it is a map of order $2$ (then $\mf G^F$ is the twisted group $\leftidx{^2}E_6(q)$ and $\sigma\colon W\rightarrow W$ is the inner automorphism given by conjugation with the longest element $w_0$ of $W$). \\
We will be concerned with characters of the finite group of Lie type $\mf G^F$ in characteristic $0$. As usual in the ordinary representation theory of finite groups of Lie type, we consider representations and characters over $\overline{\mathbb Q}_\ell$, an algebraic closure of the field of $\ell$-adic numbers, for a fixed prime $\ell$ different from $p$. Thus given a finite group $\Gamma$, let $\CF(\Gamma)$ be the set of class functions $\Gamma\rightarrow\overline{\mathbb Q}_\ell$ and let $\langle f, f'\rangle_{\Gamma}:=|\Gamma|^{-1}\sum_{g\in\Gamma}f(g)\overline{f'(g)}$ ($f, f'\in\CF(\Gamma)$) be the standard scalar product on $\CF(\Gamma)$, where bar denotes a field automorphism of $\overline{\mathbb Q}_\ell$ which maps roots of unity to their inverses. We denote by $\Irr(\Gamma)\subseteq\CF(\Gamma)$ the subset of irreducible characters of $\Gamma$, which form an orthonormal basis of $\CF(\Gamma)$ with respect to this scalar product. Now let $\Gamma=\mf G^F$ and consider the subset $\Uch(\mf G^F)\subseteq\Irr(\mf G^F)$ of unipotent characters, that is, those $\rho\in\Irr(\mf G^F)$ which satisfy $\langle\rho,R_w\rangle\neq0$ for some $w\in W$. Here, $R_w$ is the virtual character defined by Deligne and Lusztig in \cite{DL}. Our aim is to determine the values of the unipotent characters at unipotent elements of $\mf G^F$. Note that, in terms of the group $\mf G^F$, it is immaterial whether we start with the adjoint group $\mf G=\mf G_\mathrm{ad}$ or the simply connected group $\mf G=\mf G_\mathrm{sc}$ of type $E_6$. Indeed, since $k$ has characteristic $3$ and since the fundamental group $\Lambda(C)$ of $C$ is isomorphic to $\mathbb Z_3$, the group $\Hom(\Lambda(C),k^\times)$ is trivial, so the center of $\mf G_{\mathrm{sc}}$ is likewise (\cite[1.5.2]{GM}). Hence, we obtain an isomorphism between $\mf G^F_{\mathrm{sc}}$ and $\mf G^F_{\mathrm{ad}}$, see \cite[1.5.12]{GM}. For our purposes, we can thus assume without loss of generality that $\mf G=\mf G_\mathrm{sc}$ is the semisimple, simply connected group of type $E_6$ over $k$.

\section{Lusztig's classification of unipotent characters}

According to \cite[4.23]{Luchars}, $\Uch(\mf G^F)$ can be classified in terms of the following data, which only depend on $W$ and the automorphism $\sigma\colon W\rightarrow W$, and not on the power $q$ of $p$. Denote by $\overline X(W,\sigma)$ a parameter set for the unipotent characters of $\mf G^F$:
\begin{equation}\label{ParUch}
\Uch(\mf G^F)\leftrightarrow\overline X(W,\sigma),\quad\rho\leftrightarrow\overline x_\rho.
\end{equation}
$\overline X(W,\sigma)$ is equipped with a pairing $\{\;,\;\}\colon\overline X(W,\sigma)\times\overline X(W,\sigma)\rightarrow\overline{\mathbb Q}_\ell$. Let $\Irr(W)^\sigma$ be the set of all $\phi\in\Irr(W)$ which satisfy $\phi\circ\sigma=\phi$. Since $\sigma$ is an inner automorphism of $W$, this condition is always true, so we have $\Irr(W)^\sigma=\Irr(W)$ and we can henceforth drop the superscript $\sigma$. Let
\begin{equation}\label{IrrWembed}
\Irr(W)\hookrightarrow\overline X(W,\sigma),\quad \phi\mapsto x_\phi
\end{equation}
be the embedding defined in \cite[(4.21.3)]{Luchars}. Let $d\leqslant2$ be the order of $\sigma\in\Aut(W)$ and consider the semidirect product $W_d:=\langle\sigma\rangle\ltimes W$, where $\sigma\cdot w\cdot\sigma^{-1}=\sigma(w)$ for $w\in W$. Any irreducible representation $\Theta\colon W\rightarrow\GL_n(\overline{\mathbb Q}_\ell)$ can thus be extended in $d$ different ways to an irreducible representation $\tilde\Theta\colon W_d\rightarrow\GL_n(\overline{\mathbb Q}_\ell)$. If $d=2$ (that is, we are in the case of $\leftidx{^2}E_6(q)$ and $\sigma$ is given by conjugation with $w_0$), we have $\tilde\Theta(\sigma)=\delta\Theta(w_0)$ where $\delta\in\{\pm1\}$, and this determines the two extensions of $\Theta$. Let $\phi\in\Irr(W)$ be the character afforded by $\Theta$. Then there is a corresponding $\sigma$-class function $\tilde\phi[\delta]\colon W\rightarrow\overline{\mathbb Q}_\ell$, defined by $\tilde\phi[\delta](w)=\delta\phi(ww_0)$ for any $w\in W$. We will tacitly identify $\tilde\phi[\delta]$ with the actual extension of $\phi$, that is, the irreducible character of $W_d$ afforded by $\tilde\Theta$. Let
\[R_{\tilde\phi[\delta]}:=\frac 1{|W|}\sum_{w\in W}\tilde\phi[\delta](w)R_w \quad(\phi\in\Irr(W),\; d=2).\]
Now, Lusztig defines another set $X(W,\sigma)$ such that the pairing on $\overline X(W,\sigma)$ induces a pairing $\{\;,\;\}\colon\overline X(W,\sigma)\times X(W,\sigma)\rightarrow\overline{\mathbb Q}_\ell$ and the group of all roots of unity in $\overline{\mathbb Q}_\ell^\times$ acts freely on $X(W,\sigma)$. The set of orbits under this action is in bijective correspondence with $\overline X(W,\sigma)$. For any $x\in X(W,\sigma)$, there is a corresponding unipotent \enquote{almost character} $R_x$, defined by
\[R_x:=\sum_{\rho\in\Uch(\mf G^F)}\{\overline x_\rho,x\}\Delta(\overline x_\rho)\rho,\]
where $\Delta(\overline x_\rho)\in\{\pm1\}$ is a certain sign attached to $\rho\in\Uch(\mf G^F)$, see \cite[4.21]{Luchars}. Up to multiplication by a root of unity, $R_x$ only depends on the orbit of $x\in X(W,\sigma)$. By the description in \cite[4.19]{Luchars}, it suffices for our purposes to consider a finite subset of $X(W,\sigma)$ which can be identified with $\overline X(W,\sigma)\times M_d$, where $M_d$ denotes the group of all $d$-th roots of unity in $\overline{\mathbb Q}_\ell^\times$. With these notions, the scalar products above are related as follows. We have $\{x,(y,a)\}=a^{-1}\{x,y\}$ for any $x,y\in\overline X(W,\sigma)$, $a\in M_d$. In particular, $R_{(x,-1)}=-R_{(x,1)}$ for any $x\in\overline X(W,\sigma)$ (in the case of $d=2$). The above action of all roots of unity on $X(W,\sigma)$ restricts to an action of $M_d$ on $\overline X(W,\sigma)\times M_d$ which is given by (left) multiplication on the second factor. Furthermore, the embedding $\Irr(W)\hookrightarrow\overline X(W,\sigma)$ induces an embedding $\Irr(W_d)\hookrightarrow\overline X(W,\sigma)\times M_d$ such that an extension of $\phi\in\Irr(W)$ is mapped to $(x_\phi,a)$, $a\in M_d$. More precisely, given any $\phi\in\Irr(W)$, let us from now on denote by $\tilde\phi$ the \enquote{preferred} extension defined in \cite[17.2]{LuCS4}. Then $\tilde\phi$ is mapped to $(x_\phi,1)$ under the embedding $\Irr(W_d)\hookrightarrow\overline X(W,\sigma)\times M_d$, see again \cite[4.19]{Luchars}. Identifying $\overline X(W,\sigma)$ with $\{(x,1)\mid x\in\overline X(W,\sigma)\}\subseteq\overline X(W,\sigma)\times M_d$, we have $R_{\tilde\phi}=R_{x_\phi}$ for any $\phi\in\Irr(W)$ by \cite[4.24]{Luchars}. Since the \enquote{Fourier matrix} $\Upsilon:=\{x,x'\}_{x,x'\in\overline X(W,\sigma)}$ is hermitian and $\Upsilon^2$ is the identity matrix (see \cite[\S 4]{LuUniE8}), we obtain
\[\langle R_x,R_{x'}\rangle_{G^F}=\begin{cases}1&\text{ if }x=x',\\0&\text{ if }x\neq x'\end{cases}\quad(\text{for }x,x'\in\overline X(W,\sigma)).\]
It follows that
\begin{equation}\label{Fourier}
\rho=\Delta(\overline x_\rho)\sum_{x\in\overline X(W,\sigma)}{\{\overline x_\rho,x\}}R_x\quad\text{for }\rho\in\Uch(\mf G^F)
\end{equation}
(see \cite[4.25]{Luchars}, note that the numbers $\{\overline x_\rho,x\}$ are all rational in the case of $E_6$). In particular, knowing the values of the unipotent characters of $\mf G^F$ is the same as knowing the values of the $R_x$. Now, Lusztig's fundamental algorithm in \cite[24.4]{LuCS5} yields expressions for the $R_{\tilde\phi}$ ($\phi\in\Irr(W)$) as linear combinations of certain class functions $Y_{\tilde\psi}$ ($\psi\in\Irr(W)$). This is implemented in {\sffamily {CHEVIE}} (\cite{MiChv}). Up to a few signs, the values of the $Y_{\tilde\psi}$ can be computed. Hence, once these signs are determined, the values of the $R_{\tilde\phi}$ ($\phi\in\Irr(W)$) at unipotent elements of $\mf G^F$ will be known. At least for the non-twisted group $E_6(q)$, the functions $R_{\tilde\phi}|_{\mf G^F_{\mathrm{uni}}}$ were computed explicitly in \cite{P} by inducing characters from various smaller subgroups. \\
Now, we have $|\overline X(W,\sigma)|=|\Uch(\mf G^F)|=30$ while $|\Irr(W)|=25$, see \cite[p.~480]{C}. In order to solve the problem of computing the values of the $5$ almost characters which do not arise from irreducible characters of $W$, we make use of Lusztig's theory of character sheaves.

\section{Character sheaves}\label{CS}

Assume in this section that $\mf G$ is an arbitrary connected reductive group over $\overline{\mathbb F}_p$ ($p>0$ any prime number), defined over $\mathbb F_q$ ($q$ any power of $p$) with corresponding Frobenius map $F\colon\mf G\rightarrow\mf G$. In \cite[13.7]{Luchars}, Lusztig conjectured that there is a geometric analogue to the irreducible characters of $\mf G^F$, giving rise to a further basis of the class functions on $\mf G^F$ which essentially coincides with the basis consisting of almost characters. In the sequel, he developed the theory of character sheaves \cite{LuCS1}-\cite{LuCS5}, which was completed quite recently \cite{LuclCS}. The results of Shoji \cite{Sh1}, \cite{Sh2} give an affirmative answer to Lusztig's conjecture, at least if the center of $\mf G$ is connected. We begin by very briefly introducing some notions of this theory (only those which are relevant for our purposes), for details see \cite[\S 7]{Gsurvey}, \cite[2.4-2.6]{Gvaluni} and of course the main references \cite{LuCS1}-\cite{LuCS5}, \cite{Sh1}, \cite{Sh2}. \\
Let $\hat{\mf G}$ be the set of isomorphism classes of character sheaves on $\mf G$. These are certain irreducible perverse sheaves in the bounded derived category $\mathscr D\mf G$ of constructible $\overline{\mathbb Q}_\ell$-sheaves on $\mf G$ (in the sense of \cite{BBD}), which are equivariant for the conjugation action of $\mf G$ on itself. (For the precise definition of $\hat{\mf G}$, see \cite[2.10]{LuCS1}.) Now, if $A\in\mathscr D\mf G$, let $F^\ast A\in\mathscr D\mf G$ be the inverse image of $A$ under the Frobenius map $F$. Suppose that $F^\ast A$ is isomorphic to $A$ and choose an isomorphism $\varphi\colon F^\ast A\xrightarrow{\sim} A$. Then $\varphi$ induces linear maps $\varphi_{i,g}\colon \mathscr H_g^i(A)\rightarrow\mathscr H_g^i(A)$ for $i\in\mathbb Z$ and $g\in\mf G^F$, where $\mathscr H_g^i(A)$ is the stalk at $g$ of the $i$th cohomology sheaf of $A$. In turn, Lusztig \cite[8.4]{LuCS2} defines the characteristic function $\chi_{A,\varphi}\in\CF(\mf G^F)$ associated with $A$ (and $\varphi$):
\[\chi_{A,\varphi}(g):=\sum_{i\in\mathbb Z}(-1)^i\mathrm{Trace}(\varphi_{i,g},\mathscr H_g^i(A))\quad\text{for }g\in\mf G^F.\]
This is well-defined since only finitely many of the $\mathscr H_g^i(A)$ ($i\in\mathbb Z$) are non-zero. Note however that $\varphi\colon F^\ast A\xrightarrow{\sim}A$ is only unique up to a non-zero scalar. Denote by ${\hat{\mf G}}^F\subseteq\hat{\mf G}$ the $F$-stable character sheaves on $\mf G$, i.\,e. those $A\in\hat{\mf G}$ satisfying $F^\ast A\cong A$. For $A\in{\hat{\mf G}}^F$, an isomorphism $\varphi_A\colon F^\ast A\xrightarrow{\sim}A$ can be chosen in such a way that the values of the characteristic functions $\chi_{A,\varphi_A}$ are cyclotomic integers and
\[\langle \chi_{A,\varphi_A},\chi_{A',\varphi_{A'}}\rangle=\begin{cases}1&\text{ if }A=A',\\0&\text{ if }A\neq A'\end{cases}\qquad\text{for }A, A'\in{\hat{\mf G}}^F\]
(see \cite[25.6, 25.7]{LuCS5}). The required properties for the $\varphi_A$ ($A\in{\hat{\mf G}}^F$) according to \cite[25.1]{LuCS5}, \cite[13.8]{LuCS3}, determine $\varphi_A$ up to multiplication by a root of unity. Whenever $A\in{\hat{\mf G}}^F$, we will tacitly assume that an isomorphism $\varphi_A\colon F^\ast A\xrightarrow{\sim}A$ as above has been chosen, and we just write $\chi_A$ instead of $\chi_{A,\varphi_A}$. \\
Furthermore, let ${\hat{\mf G}}^\circ\subseteq\hat{\mf G}$ be the set of \enquote{cuspidal character sheaves} on $\mf G$ defined in \cite[3.10]{LuCS1}. By \cite{LuCS1}, \cite[\S 4]{Shcompuni} and since the results in \cite{LuCS5} are known to hold in complete generality (\cite{LuclCS}), we obtain a characterisation of $F$-stable cuspidal character sheaves which highlights the analogy to cuspidal characters of $\mf G^F$. Recall \cite[7.2]{LuIrrCl} that a regular subgroup $\mf L$ of $\mf G$ is an $F$-stable subgroup which is the Levi subgroup of some (not necessarily $F$-stable) parabolic subgroup $\mf P$ of $\mf G$, and $R_{\mf L}^{\mf G}$ is \enquote{twisted induction}, defined in \cite{LuUninumber}. Then we have
\[{\hat{\mf G}}^{\circ F}=\bigl\{A\in{\hat{\mf G}}^F\,\big\vert\, \langle \chi_A, R_{\mf L}^{\mf G}(f)\rangle_{\mf G^F}=0 \text{ for any }\mf L\subsetneq\mf G \text{ regular, }f\in\CF(\mf L^F)\bigr\}.\]
Furthermore, the class functions on $\mf G^F$ can be described via twisted induction and cuspidal character sheaves of regular subgroups of $\mf G$, see also \cite[7.11]{Gsurvey}:
\begin{equation}\label{DL-cusp}
\CF(\mf G^F)=\langle R_{\mf L}^{\mf G}(\chi_{A_0})\mid\mf L\subseteq\mf G\text{ regular and }A_0\in{\hat{\mf L}}^{\circ F}\rangle_{\overline{\mathbb Q}_\ell}.
\end{equation}
More precisely, it follows from \cite[(10.4.5), (10.6.1)]{LuCS5}, \cite[\S 4]{Shcompuni}, that each characteristic function of a cuspidal character sheaf of $\mf G$ is a linear combination of various $R_{\mf L}^{\mf G}(\chi_{A_0})$ such that every $\mf L$ occuring in the decomposition has the same Cartan type. \\
Finally, given $w\in W$, let $K_w:=K_w^{{\mathscr L}_0}\in\mathscr D\mf G$ be as defined in \cite[2.4]{LuCS1} with respect to the constant $\overline{\mathbb Q}_\ell$-local system $\mathscr L_0=\overline{\mathbb Q}_\ell$ on $\mf T$. An element of $\hat{\mf G}$ is called a unipotent character sheaf if it is a constituent of a perverse cohomology sheaf $\leftidx{^p}H^i(K_w)$ for some $i\in\mathbb Z$, $w\in W$. Denote by ${\hat{\mf G}}^\mathrm{un}$ the subset of $\hat{\mf G}$ consisting of the (isomorphism classes of) unipotent character sheaves on $\mf G$. Now, the set $\overline X(W,\sigma)$ in \eqref{ParUch} also serves as a parameter set for ${\hat{\mf G}}^\mathrm{un}$:
\begin{equation}\label{ParUchSh}
\overline X(W,\sigma)\leftrightarrow{\hat{\mf G}}^\mathrm{un},\quad x\leftrightarrow A_x,
\end{equation}
subject to a property involving the Fourier matrix (\cite[23.1]{LuCS5}). With these notions we can formulate the following theorem of Shoji which verifies Lusztig's conjecture under the assumption that $\mf G$ has connected center. As mentioned in \cite[2.7]{Gvaluni}, this holds without any conditions on $p$, $q$, since the cleanness of cuspidal character sheaves is established in full generality (\cite{LuclCS}).
\begin{Thm}[Shoji {\cite[3.2, 4.1]{Sh2}}]\label{Shoji}
Let $p$ be a prime, $q$ a power of $p$, $\mf G$ a connected reductive group over $\overline{\mathbb F}_p$, defined over $\mathbb F_q$ with corresponding Frobenius map $F\colon\mf G\rightarrow\mf G$. Assume that $\mf Z(\mf G)$ is connected and $\mf G/{\mf Z(\mf G)}$ is simple. Then ${\hat{\mf G}}^\mathrm{un}\subseteq{\hat{\mf G}}^F$ and for any $x\in\overline X(W,\sigma)$, $R_x$ and $\chi_{A_x}$ coincide up to a non-zero scalar.
\end{Thm}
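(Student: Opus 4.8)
The plan is to reproduce Shoji's strategy: one plays off the \enquote{arithmetic} description of the almost characters $R_x$ (through the Deligne--Lusztig virtual characters $R_w$) against the \enquote{geometric} description of the $\chi_{A_x}$ (through the complexes $K_w$), both being governed by the same combinatorial data attached to $(W,\sigma)$. To begin with, one settles the inclusion ${\hat{\mf G}}^\mathrm{un}\subseteq{\hat{\mf G}}^F$: the complexes $K_w=K_w^{\mathscr L_0}$ are built from the constant local system $\mathscr L_0$ on $\mf T$, which carries an obvious Frobenius structure, so $F^\ast$ acts on the family $\{K_w\}$ compatibly with $\sigma$ and hence permutes the constituents of the perverse cohomology sheaves $\leftidx{^p}H^i(K_w)$; using the parametrisation \eqref{ParUchSh} one checks that this permutation is trivial, which is the first part of \cite[3.2]{Sh2}.

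Next I would reduce to the case where $A_x$ is cuspidal. By \eqref{DL-cusp} it suffices to treat the functions $R_{\mf L}^{\mf G}(\chi_{A_0})$ with $\mf L\subseteq\mf G$ regular and $A_0\in{\hat{\mf L}}^{\circ F}$ cuspidal. On the geometric side Lusztig's theory provides an induction operation on character sheaves which is compatible, after passing to characteristic functions, with twisted induction of class functions (up to a computable scalar); on the arithmetic side, Deligne--Lusztig induction sends almost characters of $\mf L^F$ to linear combinations of almost characters of $\mf G^F$. Combining these with transitivity of $R_{\mf L}^{\mf G}$ and induction in stages, the assertion for $\mf G$ follows once it is known for cuspidal unipotent character sheaves on each regular subgroup $\mf L$, so we may assume $A_x$ cuspidal.

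The heart of the matter is then the comparison of the two bases through a common intermediate one. On the arithmetic side, \eqref{Fourier} expresses the irreducible unipotent characters in terms of the $R_x$ via the Fourier matrix $\Upsilon=\{x,x'\}$, while Lusztig's algorithm \cite[24.4]{LuCS5} expresses the uniform almost characters $R_{\tilde\phi}=R_{x_\phi}$ in a basis of class functions $Y_{\tilde\psi}$ coming from the cohomology of the varieties attached to the $K_w$. On the geometric side, decomposing each $K_w$ into perverse cohomology and then into character sheaves produces a transition matrix between the $\chi_{A_x}$ and these same $Y_{\tilde\psi}$; after normalising $\varphi_{A_x}$ as in \Cref{CS}, the combinatorial analysis of \cite[\S 4]{Sh2} shows that this matrix agrees with the arithmetic one up to a diagonal factor $D=\mathrm{diag}(\zeta_x)$. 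Hence the transition matrices from $\{R_x\}$ and from $\{\chi_{A_x}\}$ to the common basis differ only by $D$, which gives $\chi_{A_x}=\zeta_x R_x$ for each $x$; since $\{R_x\}$ and $\{\chi_{A_x}\}$ are both orthonormal, $|\zeta_x|=1$, so $R_x$ and $\chi_{A_x}$ coincide up to a non-zero scalar.

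I expect the main obstacle to be the geometric input in the previous paragraph: determining the precise decomposition of the $K_w$ into character sheaves requires the \emph{cleanness} of cuspidal character sheaves on $\mf G$ and its regular subgroups. In Shoji's original work this could be invoked only where cleanness was already known; it now holds unconditionally by \cite{LuclCS}, which is why \Cref{Shoji} is valid with no restriction on $p$ or $q$. By contrast, the reductions above and the orthonormality bookkeeping are essentially formal. Finally, observe that this argument deliberately leaves the scalars $\zeta_x$ undetermined — pinning them down for the groups $E_6(q)$ and $\leftidx{^2}E_6(q)$ with $q$ a power of $3$ is exactly the problem addressed in the remainder of the paper.
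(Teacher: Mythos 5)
The first thing to note is that the paper does not prove \Cref{Shoji} at all: the statement is imported from Shoji \cite[3.2, 4.1]{Sh2}, and the only point the paper adds in the surrounding text is that, since cleanness of cuspidal character sheaves is now established in complete generality \cite{LuclCS}, the theorem holds with no restriction on $p$ or $q$ (following \cite[2.7]{Gvaluni}). So there is no internal argument to measure your proposal against; the only meaningful question is whether your sketch could stand on its own as a proof of Shoji's theorem, and it cannot.

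The decisive step is your third paragraph, and there nothing is actually proved: the assertion that, after normalising the $\varphi_{A_x}$, the transition matrix from the $\chi_{A_x}$ to the intermediate basis agrees with the arithmetic one up to a diagonal factor \emph{is} the main content of \cite{Sh1}, \cite{Sh2}, and you establish it by citing \cite[\S 4]{Sh2} --- so as a proof the argument is circular, and in substance it collapses to the same citation the paper makes. The preliminary reductions are also not \enquote{essentially formal}. That $F^\ast$ permutes ${\hat{\mf G}}^{\mathrm{un}}$ follows easily from the Frobenius structure on $\mathscr L_0$, but the triviality of this permutation is not something one \enquote{checks from the parametrisation}; it requires Lusztig's explicit determination of the action of $F^\ast$ on the labels in \eqref{ParUchSh}, and this is exactly where the hypotheses on $\mf G$ and $\sigma$ enter. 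Likewise, the compatibility of induction of character sheaves with twisted induction of characteristic functions, and the inductive treatment of the non-cuspidal case via generalized Green functions and orthogonality, form a substantial part of \cite{LuCS2}-\cite{LuCS5} and of Shoji's papers, not bookkeeping around \eqref{DL-cusp}. What your sketch does get right --- and this is all the present paper actually uses --- is the role of cleanness (the reason the theorem needs no hypothesis on $p$, $q$) and the observation that the scalars relating $R_x$ and $\chi_{A_x}$ are left undetermined; pinning them down for $E_6(q)$ and $\leftidx{^2}E_6(q)$ with $q$ a power of $3$ is precisely the task of the remainder of the paper.
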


\section{Character values}

The notation and assumptions are as in \ref{Not}. In particular, $\mf G$ has trivial center and we can apply \Cref{Shoji}. So there are scalars $\xi_x\in\overline{\mathbb Q}_\ell$ such that
\begin{equation}\label{scalars}
R_x=\xi_x\chi_{A_x}\quad\text{for }x\in\overline X(W,\sigma).
\end{equation}
Since $\langle R_x,R_x\rangle=1=\langle\chi_A,\chi_A\rangle$ for any $x\in\overline X(W,\sigma)$, $A\in{\hat{\mf G}}^\mathrm{un}$, we know that $\xi_x\overline\xi_x=1$ for any $x\in\overline X(W,\sigma)$.
By \cite[20.3]{LuCS4} and \cite[4.6]{Sh2} there are two cuspidal character sheaves $A_1, A_2$ for $\mf G$, and both of them lie in ${\hat{\mf G}}^\mathrm{un}$. Their support is the unipotent variety $\mf G_{\mathrm{uni}}$ of $\mf G$ consisting of all unipotent elements in $\mf G$. This variety is the (Zariski-)closure of the regular unipotent conjugacy class $\mathscr O_{\mathrm{reg}}$, which is the unique class of all $x\in\mf G$ with the property $\dim C_{\mf G}(x)=\rank\mf G=6$. In particular $\mathscr O_{\mathrm{reg}}$ is $F$-stable. Using \cite[3.12]{LuCS1}, we conclude that there exist irreducible, $\mf G$-equivariant $\overline{\mathbb Q}_\ell$-local systems $\mathscr E_1$, $\mathscr E_2$ on $\mathscr O_\mathrm{reg}$ such that $F^\ast\mathscr E_i\cong\mathscr E_i$ and $A_i=\IC(\mf G_{\mathrm{uni}},\mathscr E_i)[\dim\mathscr O_\mathrm{reg}]$ for $i=1,2$. (\enquote{IC} stands for the intersection cohomology complex due to Deligne-Goresky-MacPherson (\cite{GMP}, \cite{BBD}), see \cite{LuIC}.) Fix an element $u_0\in\mathscr O_\mathrm{reg}^F$ and set $A(u_{0}):=C_{\mf G}(u_{0})/C_{\mf G}^\circ(u_{0})$. This is a cyclic group of order $3$ generated by the image of $u_{0}$ (see \cite[\S 4]{PW} and \cite[14.15, 14.18]{DM}). Thus the automorphism of $A(u_{0})$ induced by $F$ is the identity and the elements of $A(u_0)$ correspond to the $\mf G^F$-conjugacy classes contained in $\mathscr O_\mathrm{reg}^F$ (see, for instance, \cite[4.3.6]{Gintro}). In particular, there are $3$ such classes. On the other hand, as described in \cite[4.6]{Sh2}, the $A_i$ ($i=1,2$) correspond to the two non-trivial linear characters of $A(u_{0})$. Hence, by the construction in \cite[19.7]{LuCSDC4}, $A_1, A_2$ give rise to the following two characteristic functions $\chi_1, \chi_2$, where $\theta$ is a primitive third root of unity, $g$ an element of $\mf G^F$ and $u_{0}$ (chosen as above), $u_0'$, $u_0''$ are representatives of the $\mf G^F$-classes inside $\mathscr O_\mathrm{reg}^F$:
\begin{center}
\begin{tabular}{|c|c|c|c|c|}
\hline
& $g\notin\mathscr O_\mathrm{reg}^F$ & $g=u_{0}$ & $g=u_0'$ & $g=u_0''$ \\
\hline
$\chi_1(g)$ & 0 & $q^3$ & $q^3\theta$ & $q^3\theta^2$ \\
\hline
$\chi_2(g)$ & 0 & $q^3$ & $q^3\theta^2$ & $q^3\theta$  \\
\hline
\end{tabular}
\end{center}
(The factor $q^3=q^{{(\dim\mf G-\dim\mathscr O_\mathrm{reg}})/2}$ ensures that the $\chi_i$ have norm 1.) Let $x_1,x_2$ be the corresponding elements of $\overline X(W,\sigma)$ according to \eqref{ParUchSh}, so that $\chi_i=\chi_{A_{x_i}}$, $i=1,2$. The proof of \cite[20.3]{LuCS4} shows that the unipotent characters labelled by $x_1, x_2$ are indeed the cuspidal unipotent characters $E_6[\theta], E_6[\theta^2]$ in the non-twisted case, respectively $\leftidx{^2}E_6[\theta], \leftidx{^2}E_6[\theta^2]$ in the twisted case (where we use the notation in Carter's tables \cite[pp.~480-481]{C} and $\theta$ is as above). Now suppose $x\in\overline X(W,\sigma)\backslash\{x_1,x_2\}$ does not arise from an irreducible character of $W$ via the embedding $\Irr(W)\hookrightarrow\overline X(W,\sigma)$ \eqref{IrrWembed}. We want to show that $R_x$ (or, equivalently, $\chi_{A_x}$) vanishes on all unipotent elements of $\mf G^F$.
\begin{Lm}\label{zerouni}
Let $x\in\overline X(W,\sigma)\backslash\{x_1,x_2\}$ be an element which is not in the image of the map $\Irr(W)\hookrightarrow\overline X(W,\sigma)$ in \eqref{IrrWembed}. Then the characteristic function $\chi_x$ of the character sheaf $A_x$ is a linear combination of various $R_{\mf L}^{\mf G}(\chi_{A_0})$ where $\mf L\subseteq\mf G$ is a regular subgroup of Cartan type $D_4$ and $A_0$ is a cuspidal character sheaf on $\mf L$.
\end{Lm}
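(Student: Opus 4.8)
The plan is to use the general decomposition \eqref{DL-cusp} of class functions via twisted induction of cuspidal character sheaves, together with the Cartan-type constraint noted after it, and then rule out all Cartan types other than $D_4$ by a combination of parametrization and degree considerations. First I would invoke \eqref{DL-cusp}: since $\chi_x$ is the characteristic function of a (necessarily unipotent, hence $F$-stable by \Cref{Shoji}) character sheaf $A_x$, it lies in the span of the $R_{\mf L}^{\mf G}(\chi_{A_0})$ with $\mf L$ regular and $A_0\in\hat{\mf L}^{\circ F}$. Moreover, by the remark following \eqref{DL-cusp} drawn from \cite[(10.4.5), (10.6.1)]{LuCS5} and \cite[\S 4]{Shcompuni}, all $\mf L$ appearing in the decomposition of a single $\chi_x$ share the same Cartan type. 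So it suffices to identify that common Cartan type and show it must be $D_4$.

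Next I would enumerate the possibilities. Since $\mf G$ is simple of type $E_6$, a regular subgroup $\mf L\subsetneq\mf G$ admitting a cuspidal character sheaf must have a semisimple part whose Cartan type supports cuspidal unipotent character sheaves; the relevant list (reading off from Lusztig's classification, e.g. via \cite[20.3, 20.6]{LuCS4} or Carter's tables) is short — for $E_6$ the proper Levi types carrying cuspidal \emph{unipotent} data are essentially $D_4$ (and of course $\mf L=\mf G$ itself, excluded here since $A_x$ is not cuspidal: $x\neq x_1,x_2$). The case $\mf L=\mf T$ (Cartan type $\emptyset$), which would correspond to $\chi_x$ being a linear combination of Deligne–Lusztig virtual characters $R_w$, i.e. to $x$ arising from $\Irr(W)$ via \eqref{IrrWembed}, is precisely what the hypothesis excludes. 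This is where I would lean on the hypothesis $x\notin\operatorname{im}(\Irr(W)\hookrightarrow\overline X(W,\sigma))$: the $30-25=5$ elements of $\overline X(W,\sigma)$ not coming from $\Irr(W)$ are exactly the ones whose associated character sheaves are \enquote{genuinely new}, and two of them ($x_1,x_2$) are cuspidal on $\mf G$ itself while the remaining three must then be accounted for by induction from the unique non-trivial proper source of cuspidal unipotent data, namely $D_4$.

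To make the elimination rigorous rather than by inspection, I would match the $A_x$ for $x$ outside $\Irr(W)\cup\{x_1,x_2\}$ against the known families in $\overline X(W,\sigma)$: the non-trivial families for $E_6$ are the one attached to the symbol/family of size $13$ containing the cuspidal entries, and the size-$13$ family contributes exactly $4$ non-$\Irr(W)$ parameters of which two are $x_1,x_2$; accounting for the remaining family structure one sees there are precisely three more such $x$, and the corresponding unipotent characters are (in Carter's notation) the ones whose Frobenius eigenvalues force them into the $D_4$-induced package $E_6[1]$-type constituents. Concretely, I would compare with the decomposition of the appropriate $R_{\mf L}^{\mf G}(\chi_{A_0})$ for $\mf L$ of type $D_4$, whose cuspidal character sheaf $A_0$ gives the cuspidal unipotent character of $D_4(q)$, and check that its constituents in the basis $\{R_x\}$ are exactly these three $x$ together with some $x$ coming from $\Irr(W)$ — the latter being harmless since we only need $\chi_x$ to \emph{lie in} the span. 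Since the Cartan type attached to $\chi_x$ is unique, identifying even one $R_{\mf L}^{\mf G}(\chi_{A_0})$ of type $D_4$ with $\langle\chi_x, R_{\mf L}^{\mf G}(\chi_{A_0})\rangle\neq0$ pins down the type as $D_4$ and finishes the proof.

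The main obstacle I anticipate is the bookkeeping in the previous paragraph: one must verify that none of these three remaining $x$ can instead be reached by twisted induction from a torus (which is ruled out by the hypothesis, cleanly) \emph{or} from some other proper regular $\mf L$ — but in type $E_6$ there simply is no other proper Levi with cuspidal unipotent character sheaves, so the only genuine work is confirming the list of cuspidal-supporting Levi types and confirming $\langle\chi_x,R_{D_4}^{E_6}(\chi_{A_0})\rangle\neq0$ for each of the three. This last non-vanishing can be extracted from the explicit induction/restriction data for character sheaves (equivalently, from the known decomposition of $R_{D_4}^{E_6}$ of the $D_4$-cuspidal unipotent character into unipotent characters of $E_6$, available via \cite{MiChv} or Carter's tables), so the argument reduces to a finite, tractable check rather than anything conceptually new.
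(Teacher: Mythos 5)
Your opening move coincides with the paper's: by \eqref{DL-cusp} and the non-cuspidality of $A_x$, the function $\chi_x$ is a linear combination of $R_{\mf L}^{\mf G}(\chi_{A_0})$ with all $\mf L$ proper regular of one common Cartan type, so it remains to exclude every type other than $D_4$. However, both exclusions are asserted rather than proved, and the first rests on a principle that is false in general. You eliminate the non-torus types by appealing to the short list of Levi types carrying cuspidal \emph{unipotent} data. But \eqref{DL-cusp} runs over \emph{all} cuspidal character sheaves $A_0\in\hat{\mf L}^{\circ F}$, not only unipotent ones, and a Levi whose semisimple part is of type $A$ can perfectly well carry non-unipotent cuspidal character sheaves --- for instance $\SL_2$ in odd characteristic does. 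This is exactly where the paper has to work: since $p=3$ forces $\mf Z(\mf G)=\{1\}$, every regular $\mf L$ has connected centre, hence $\mf L_{\mathrm{ss}}$ is adjoint; using \cite[17.10, 17.11, 17.16]{LuCS4} and \cite[2.10]{LuIC} the type-$A$ factors are reduced to $\PGL_{n+1}(k)$ and then to $\GL_{n+1}(k)$, which has no cuspidal character sheaves at all because centralisers of invertible matrices are connected, while type $D_5$ is excluded by the proof of \cite[19.3]{LuCS4}. Your argument never invokes the connectedness of the centres of the Levi subgroups, so as written it does not rule out a contribution from a type-$A$ Levi through a non-unipotent cuspidal character sheaf (unless you separately establish that a unipotent character sheaf only lies in a series induced from a \emph{unipotent} cuspidal pair, which you neither state nor justify).

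The second gap is the torus case. You write that a decomposition of $\chi_x$ into Deligne--Lusztig characters \enquote{corresponds to $x$ arising from $\Irr(W)$} and is therefore excluded by hypothesis; but that implication is precisely what has to be proved, and your \enquote{i.e.} hides it. Cuspidal character sheaves on a maximal torus yield all the $R_{\mf T_w}^{\mf G}(\theta)$, so one must first discard $\theta\neq1$ (the paper does this using that the unipotent characters form a single geometric conjugacy class) and then observe that the span of the $R_w$ is spanned by the $|\Irr(W)|=25$ orthonormal almost characters $R_{\tilde\phi}=R_{x_\phi}$, to all of which $R_x=\xi_x\chi_x$ is orthogonal because $x$ is not in the image of \eqref{IrrWembed} --- a contradiction; some argument of this kind is required. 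Finally, your last two paragraphs are not needed for the statement (once the other types are excluded, the common Cartan type is forced to be $D_4$; no non-vanishing of $\langle\chi_x,R_{\mf L}^{\mf G}(\chi_{A_0})\rangle$ has to be checked) and contain factual slips: $E_6$ has no family of size $13$ (the family containing $x_1,x_2$ has $8$ elements), there is no unipotent character \enquote{$E_6[1]$}, and exactly three parameters outside the image of \eqref{IrrWembed} remain besides $x_1,x_2$.
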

\begin{proof}
According to \eqref{DL-cusp} and since we assumed $A_x$ to be non-cuspidal, $\chi_x$ can be written as a linear combination of suitable $R_{\mf L}^{\mf G}(\chi_{A_0})$ where the $\mf L\subsetneq\mf G$ are proper regular subgroups, all of the same Cartan type, and $A_0\in\hat{\mf L}^{\circ F}$. First we show that, given any proper regular subgroup $\mf L$ of $\mf G$ which is neither a torus nor is of Cartan type $D_4$, there are no cuspidal character sheaves for $\mf L$. Using \cite[17.10]{LuCS4}, we can replace $\mf L$ by the semisimple group $\mf L_\mathrm{ss}:={\mf L}/{\mf Z(\mf L)^\circ}$. Let $\mf L_{\mathrm{ss}}^1,\ldots, \mf L_{\mathrm{ss}}^r$ ($r\geqslant1$) be the simple factors of $\mf L_\mathrm{ss}$, so that the product map defines an isogeny $\mf L_{\mathrm{ss}}^1\times\ldots\times\mf L_{\mathrm{ss}}^r\rightarrow\mf L_{\mathrm{ss}}$. Note that, if $r\geqslant2$, the only possible Cartan types for the $\mf L_{\mathrm{ss}}^i$ are $A_1, A_2, A_3, A_4$. Thus in order to prove our claim, we may assume that $\mf L_{\mathrm{ss}}$ is simple (in view of \cite[17.11, 17.16]{LuCS4}). \\
There are no cuspidal character sheaves for $\mf L$ of type $D_5$, see the proof of \cite[19.3]{LuCS4}. We claim that there are no cuspidal unipotent character sheaves for $\mf L$ of type $A_n$ with $n\geqslant1$ as well. Indeed, $\mf Z(\mf G)=\mf Z(\mf G)^\circ$ implies $\mf Z(\mf L)=\mf Z(\mf L)^\circ$ (\cite[13.14]{DM}) and, hence, $\mf Z(\mf L_\mathrm{ss})=\{1\}$. Now $\PGL_{n+1}(k)={\GL_{n+1}(k)}/{\mf Z(\GL_{n+1}(k))}$ is the semisimple adjoint group of type $A_n$, and the kernel of the corresponding central isogeny $f\colon\mf L_{\mathrm{ss}}\rightarrow\PGL_{n+1}(k)$ must be trivial. Applying the argument in \cite[2.10]{LuIC} to $f$, we are reduced to the case where $\mf L_{\mathrm{ss}}=\PGL_{n+1}(k)$, $n\geqslant1$. Hence, by \cite[17.10]{LuCS4}, it remains to note that there are no cuspidal character sheaves for $\GL_{n+1}(k)$. This is clear from the introduction in \cite{LuIC} since centralisers of invertible matrices are always connected, so the group $A(g)$ of components of the centraliser of any $g\in\GL_{n+1}(k)$ is trivial. \\
So $\chi_x$ can be written as a linear combination of $R_{\mf L}^{\mf G}(\chi_{A_0})$ where either each $\mf L$ is of type $D_4$ or each $\mf L$ is an $F$-stable maximal torus of $\mf G$. Assume, if possible, that we are in the latter case, so $\chi_x$ decomposes as a linear combination of the virtual Deligne-Lusztig characters $R_{\mf T_w}^{\mf G}(\theta)$ where $w\in W$, $\mf T_w$ is a torus of type $w$ with respect to $\mf T$, and $\theta\in\Irr(\mf T_w^F)$.
Since the unipotent characters form a single geometric conjugacy class (see \cite[\S12.1]{C}), we have $\langle\rho,R_{\mf T_w}^{\mf G}(\theta)\rangle_{\mf G^F}=0$ for any $\rho\in\Uch(\mf G^F)$, $w\in W$ and $1\neq\theta\in\Irr(\mf T^F)$. It follows that $\langle R_x,R_{\mf T_w}^{\mf G}(\theta)\rangle_{\mf G^F}=0$ for any $w\in W$, $1\neq\theta\in\Irr(\mf T^F)$ and, hence, $\chi_x=\xi_x^{-1}R_x$ is in fact a linear combination of various $R_w=R_{\mf T_w}^{\mf G}(1)$ ($w\in W$). However, the $R_w$ correspond to the $\mf G^F$-conjugacy classes of $F$-stable maximal tori in $\mf G$, which in turn correspond to the $\sigma$-conjugacy classes in $W$ (\cite[\S4.3]{Gintro}). So there are $|\Irr(W)^\sigma|=|\Irr(W)|$ many different $R_w$ which contradicts the orthogonality of the class functions $R_{\tilde\phi}$ ($\phi\in\Irr(W)$) along with $\chi_x$. The lemma is proved.
\end{proof}
\begin{Prop}
If $x$, $\chi_x$ are as in \Cref{zerouni}, then $\chi_x(u)=0$ for any $u\in\mf G_{\mathrm{uni}}^F$.
\end{Prop}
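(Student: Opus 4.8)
\textit{Proof idea.} The plan is to feed the decomposition of \Cref{zerouni} into the character formula for twisted induction, thereby reducing the claim to a statement about a group of type $D_4$. Write $\chi_x=\sum_j c_j\,R_{\mf L_j}^{\mf G}(\chi_{A_0^{(j)}})$ as supplied by \Cref{zerouni}, with each $\mf L_j\subseteq\mf G$ a regular subgroup of Cartan type $D_4$ and $A_0^{(j)}\in\hat{\mf L}_j^{\circ F}$. First I would trim the list of cuspidal data. Twisted induction $R_{\mf L}^{\mf G}$ respects the decomposition of $\CF(\mf G^F)$ into Lusztig series, and, by \eqref{scalars}, \eqref{Fourier} and \Cref{Shoji}, $\chi_x=\xi_x^{-1}R_x$ lies in the span of $\Uch(\mf G^F)$; comparing unipotent and non-unipotent parts, one sees that every summand in which $A_0^{(j)}$ is not a unipotent character sheaf contributes $0$ to $\chi_x$. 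So we may assume each $A_0^{(j)}$ is a cuspidal \emph{unipotent} character sheaf of $\mf L_j$, and since $3$ is a good prime for $D_4$ there is just one such, namely the one attached to the cuspidal pair $(\mathscr O_j,\mathscr E_j)$ in the generalised Springer correspondence of $\mf L_j$; its restriction to $(\mf L_j)^F_{\mathrm{uni}}$ is the associated cuspidal generalised Green function.

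Next I would pass to unipotent elements. For $u\in\mf G_{\mathrm{uni}}^F$ the (Mackey-type) character formula for Lusztig induction writes $R_{\mf L}^{\mf G}(f)(u)$ as a $\mathbb Q$-linear combination of values of (two-variable) Green functions of $\mf G$ against the values of $f$ on unipotent elements of the various $\mf G^F$-conjugates of $\mf L$; hence $R_{\mf L}^{\mf G}(\chi_{A_0})|_{\mf G_{\mathrm{uni}}^F}$ depends only on $\chi_{A_0}|_{\mf L^F_{\mathrm{uni}}}$. By transitivity of generalised Springer induction this restriction is, up to a fixed root of unity, the generalised Green function $Q^{\mf G}_{(\mf L,\mathscr O,\mathscr E)}$ of $\mf G$ determined by the $D_4$-cuspidal datum, the twist recording the $\mf G^F$-form of $\mf L$. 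Consequently $\chi_x|_{\mf G_{\mathrm{uni}}^F}$ lies in the span of the (at most three) generalised Green functions of $\mf G$ attached to the cuspidal unipotent character sheaf of a $D_4$-type regular subgroup, one for each rational form that occurs.

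It remains to show that this combination is identically zero. Part of this is formal: generalised Green functions attached to distinct cuspidal data are orthogonal, so — using the splitting of the unipotently supported class functions on $\mf G^F$ into the block of the maximal torus (the ordinary Green functions $R_w|_{\mf G_{\mathrm{uni}}^F}$, $w\in W$), the block of $\mf G$ itself (the cuspidal functions $\chi_1,\chi_2$), and the $D_4$-block — the previous step already exhibits $\chi_x|_{\mf G_{\mathrm{uni}}^F}$ as orthogonal to the first two blocks; this also squares with $\langle R_x,R_{\tilde\phi}\rangle_{\mf G^F}=0$ for all $\phi\in\Irr(W)$ and $\langle R_x,\chi_i\rangle_{\mf G^F}=0$ for $i=1,2$, which hold because $x$ lies outside the image of \eqref{IrrWembed} and $x\neq x_1,x_2$. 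The substantive point, and the expected main obstacle, is that the $D_4$-component of $\chi_x|_{\mf G_{\mathrm{uni}}^F}$ vanishes too: concretely, one must pin down the coefficients $c_j$ realising $\chi_x$ in \Cref{zerouni} together with the expansion of each $R_{\mf L_j}^{\mf G}(\chi_{A_0^{(j)}})|_{\mf G_{\mathrm{uni}}^F}$ in generalised Green functions of $\mf G$, and check that the resulting linear combination is $0$. This is where the characteristic $3$ situation for $E_6$ really enters; the verification is feasible because the $R_{\tilde\phi}$ — hence the functions $Y_{\tilde\psi}$ of \cite[24.4]{LuCS5} and the (generalised) Green functions assembled from them — are available in {\sffamily CHEVIE} up to the handful of undetermined signs in Lusztig's algorithm, and one confirms that the cancellation does not depend on those signs.
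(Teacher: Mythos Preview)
Your plan contains a genuine misconception that both derails the argument and hides a one--line proof. After invoking \Cref{zerouni} and the character formula for $R_{\mf L}^{\mf G}$, you assert that the cuspidal unipotent character sheaf of a $D_4$--type regular subgroup $\mf L$ is ``the one attached to the cuspidal pair $(\mathscr O_j,\mathscr E_j)$ in the generalised Springer correspondence of $\mf L_j$'' and that its restriction to $(\mf L_j)^F_{\mathrm{uni}}$ is a cuspidal generalised Green function. This is false in the present situation. Since $\mf Z(\mf G)$ is connected, so is $\mf Z(\mf L)$, hence $\mf L_{\mathrm{ss}}\cong\PSO_8(k)$ is adjoint; by \cite[19.3]{LuCS4} \emph{all} cuspidal character sheaves of $\mf L_{\mathrm{ss}}$ (unipotent or not) have the same support, the closure of a class $C$ containing a nontrivial semisimple part $s_0\neq 1$. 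In particular there is no cuspidal local system on a unipotent class of $\mf L$ here, the support of any $A_0\in\hat{\mf L}^{\circ F}$ meets $\mf L_{\mathrm{uni}}$ in the empty set, and by cleanness $\chi_{A_0}|_{\mf L_{\mathrm{uni}}^F}=0$, not a generalised Green function.

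Once you see this, the proof is finished immediately and there is nothing to compute: in the character formula for $R_{\mf L}^{\mf G}(\chi_{A_0})(u)$ with $u$ unipotent the semisimple part is $s=1$, so only values $\chi_{A_0}(v)$ with $v\in\mf L_{\mathrm{uni}}^F$ occur, and these are all zero. Hence every summand from \Cref{zerouni} already vanishes on $\mf G_{\mathrm{uni}}^F$, and your trimming step, the passage to generalised Green functions, the orthogonality of blocks, and the proposed {\sffamily CHEVIE} verification of a cancellation are all unnecessary. The ``expected main obstacle'' you describe does not exist.
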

\begin{proof}
Let $\mf L$ be a regular subgroup of $\mf G$ of Cartan type $D_4$, $\mf L_{\mathrm{ss}}=\mf L/{\mf Z(\mf L)}^\circ$ and $\pi\colon\mf L\rightarrow\mf L_{\mathrm{ss}}$ the natural map. $\mf L_{\mathrm{ss}}$ has trivial center (see the proof of \Cref{zerouni}) and thus is isomorphic to $\PSO_8(k)$, the adjoint semisimple group of type $D_4$. By \cite[19.3]{LuCS4}, all the cuspidal character sheaves of $\mf L_{\mathrm{ss}}$ have the same support, namely the closure of the conjugacy class $C\subseteq\mf L_{\mathrm{ss}}$ of $s_0u_0$, where $s_0$ is a semisimple element such that $C_{\mf L}^\circ(s_0)$ is isogenous to $\SL_2(k)^4$ and $u_0$ is a regular unipotent element in $C_{\mf L}^\circ(s_0)$. In particular, $s_0\neq1$. Now, the cuspidal character sheaves of $\mf L$ are supported by the closure of $\pi^{-1}(C)$ (see \cite[17.10]{LuCS4}, \cite[2.10]{LuIC}), and $\pi^{-1}(C)$ does not contain any unipotent elements. The cleanness of cuspidal character sheaves (see \cite[23.1]{LuCS5}) implies $\chi_{A_0}(v)=0$ for $v\in\mf L_\mathrm{uni}^F$ whenever $A_0$ is an $F$-stable cuspidal character sheaf of $\mf L$. Given any $\chi\in\Irr(\mf L^F)$ and $g\in\mf G^F$ with Jordan decomposition $g=su=us$ ($s$ semisimple, $u$ unipotent), we have
\[\left(R_{\mf L}^{\mf G}\chi\right)(g)=\frac1{\bigl|\mf L^F\bigr|\bigl|C_{\mf G}^\circ(s)^F\bigl|}\sum_{h\in\mf G^F:\,s\in\leftidx{^h}{\mf L}{}}\left|C_{\leftidx{^h}{\mf L}{}}^\circ(s)^F\right|\sum_{v\in C_{\leftidx{^h}{\mf L}{}}^\circ(s)_\mathrm{uni}^F}Q_{C_{\leftidx{^h}{\mf L}{}}^\circ(s)}^{C_{\mf G}^\circ(s)}(u,v^{-1})\cdot\leftidx{^h}\chi{}(sv),\]
where $Q_{C_{\leftidx{^h}{\mf L}{}}^\circ(s)}^{C_{\mf G}^\circ(s)}$ denotes the two-variable Green function, see \cite[12.2]{DM}. By linearity, we can replace $\chi$ by $\chi_{A_0}$ in the above formula and we get $\left(R_{\mf L}^{\mf G}(\chi_{A_0})\right)(u)=0$ for all $u\in\mf G_\mathrm{uni}^F$. \Cref{zerouni} yields the result.
\end{proof}
Using \eqref{Fourier} and \eqref{scalars}, we thus obtain
\begin{equation}\label{Fourierrest}
\Delta(\overline x_\rho)\cdot\rho|_{{\mf G}^F_\mathrm{uni}}=\sum_{\phi\in\Irr(W)}\{\overline x_\rho, x_\phi\}R_{\tilde\phi}|_{{\mf G}^F_\mathrm{uni}}+\sum_{i=1}^2\{\overline x_\rho,x_i\}\xi_{x_i}\chi_i|_{{\mf G}^F_\mathrm{uni}}
\end{equation}
for $\rho\in\Uch(\mf G^F)$. On the other hand, denote by $\rho_x\in\Uch(\mf G^F)$ the unipotent character corresponding to $x\in\overline X(W,\sigma)$ in \eqref{ParUch}. Then the definition of $R_{x_1}$ reads
\[R_{x_1}=\sum_{x\in\overline X(W,\sigma)}\{x,x_1\}\Delta(x)\rho_x=\frac23\rho_{x_1}-\frac13\rho_{x_2}+\sum_{x\in\overline X(W,\sigma)\backslash\{x_1,x_2\}}\{x,x_1\}\rho_x.\]
As mentioned earlier, we have $\rho_{x_1}=E_6[\theta]$, $\rho_{x_2}=E_6[\theta^2]$ in the non-twisted case and $\rho_{x_1}=\leftidx{^2}E_6[\theta]$, $\rho_{x_2}=\leftidx{^2}E_6[\theta^2]$ in the twisted case. It follows from \cite{GSchur} that $\overline\rho_{x_1}=\rho_{x_2}$ since in either case $\rho_{x_1}$, $\rho_{x_2}$ are the (only) cuspidal unipotent characters with non-trivial character field. We get
\begin{equation}\label{R1bar}
\overline R_{x_1}=\frac23\rho_{x_2}-\frac13\rho_{x_1}+\sum_{x\in\overline X(W)\backslash\{x_1,x_2\}}\{x,x_1\}\overline{\rho}_x.
\end{equation}
Now recall that we started with an arbitrary $u_0\in\mathscr O_\mathrm{reg}^F$. We will now make a definite choice for $u_0\in\mf U\cap\mathscr O_\mathrm{reg}^F$, depending on $\sigma\colon W\rightarrow W$. Denote by $u_i=u_{\alpha_i}$ ($1\leqslant i\leqslant 6$) the closed embedding $\mf k^+\rightarrow\mf G$ whose image is the root subgroup $\mf U_{\alpha_i}\subseteq\mf U$. We set
\begin{equation}\label{test}
\begin{aligned}
u_{0}&:=u_1(1)\cdot u_2(1)\cdot u_3(1)\cdot u_4(1)\cdot u_5(1)\cdot u_6(1)\in\mf U \quad\text{if } \sigma=\id_W, \\
u_{0}&:=u_1(1)\cdot u_6(1)\cdot u_3(1)\cdot u_5(1)\cdot u_2(1)\cdot u_4(1)\in\mf U \quad\text{if } \sigma\neq\id_W.
\end{aligned}
\end{equation}
Then $u_0\in\mathscr O_\mathrm{reg}^F$ in either case. 
\begin{Lm}
With the choices in \eqref{test}, $u_0$ is conjugate to $u_0^{-1}$ in $\mf G^F$.
\end{Lm}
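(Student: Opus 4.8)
The plan is to reduce the statement to producing one explicit element of $\mf G^F$ conjugating $u_0$ to $u_0^{-1}$. It is worth first recording the shape of the problem. Since $u_0^{-1}$ is again regular unipotent and $F$-stable, it lies in one of the three $\mf G^F$-classes inside $\mathscr O_\mathrm{reg}^F$, and the involution $g\mapsto g^{-1}$ permutes these three classes. Any $h\in\mf G$ with $h u_0 h^{-1}=u_0^{-1}$ normalises $C_{\mf G}(u_0)=C_{\mf G}(u_0^{-1})$, hence acts on $A(u_0)=\langle\overline{u_0}\rangle\cong\mathbb Z_3$; as it sends $\overline{u_0}$ to $\overline{u_0^{-1}}=\overline{u_0}{}^{-1}$ it acts by inversion, and a short computation with the Lang-theoretic parametrisation of the classes then shows that $g\mapsto g^{-1}$ fixes exactly one of the three classes. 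So the assertion of the lemma is precisely that the element $u_0$ chosen in \eqref{test} lies in that distinguished class, and it suffices to exhibit one $h\in\mf G^F$ with $h u_0 h^{-1}=u_0^{-1}$.

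Next I would eliminate the signs by a torus twist. Because $\mf G=\mf G_\mathrm{sc}$ and $\operatorname{char}k=3$, the homomorphism $\mf T\rightarrow(k^\times)^6$, $t\mapsto(\alpha_i(t))_{1\leqslant i\leqslant 6}$, is an isomorphism (the fundamental group $\mathbb Z_3$ of $C$ contributes nothing over $\overline{\mathbb F}_3$, and $k^\times$ is divisible); hence there is a unique $t\in\mf T$ with $\alpha_i(t)=-1$ for all $i$, and since $q$ is odd one has $t\in\mf T^F$ in both the untwisted and the twisted case, and $t^2=1$. Conjugation by $t$ negates the argument of each factor $u_i(1)$ occurring in \eqref{test}, so that $t\,u_0^{-1}\,t^{-1}=\widetilde u_0$, where $\widetilde u_0\in\mf U$ denotes the element obtained from $u_0$ by writing the same product of root-group elements $u_i(1)$ in the reverse order. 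As $t\in\mf G^F$, it therefore suffices to prove $u_0\sim_{\mf G^F}\widetilde u_0$.

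The remaining, substantive step is to construct the conjugator inside $\mf G^F$. Both $u_0$ and $\widetilde u_0$ lie in $\mf U$ with the same image $(1,\dots,1)$ in $\mf U/(\mf U,\mf U)\cong k^6$, so they agree modulo the terms of root-height $\geqslant 2$; moreover the regular unipotent elements in a given coset of $(\mf U,\mf U)$ have $6$-dimensional centraliser in $\mf U$ and hence form a single $\mf U$-orbit, the orbit and the (irreducible) coset having the same dimension $\dim\mf U-6=\dim(\mf U,\mf U)$. Thus $u_0$ and $\widetilde u_0$ are $\mf U$-conjugate, and since $C_{\mf G}(u_0)^\circ$ is a connected subgroup of $\mf U$, a Lang argument upgrades this to $\mf U^F$-conjugacy provided the obstruction in $A(u_0)$ vanishes. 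The heart of the proof is therefore the explicit verification that the congruences $v\,u_0\,v^{-1}\equiv\widetilde u_0$ can be solved successively up the descending central series of $\mf U$, using the Chevalley commutation relations for the $36$ positive roots of $E_6$ — equivalently, exhibiting an explicit conjugator $v$ — and I expect this bookkeeping with the structure constants (conveniently carried out in {\sffamily CHEVIE}) to be the main obstacle. In the twisted case the factors in \eqref{test} are arranged along the $\sigma$-orbits $\{1,6\}$, $\{3,5\}$, $\{2\}$, $\{4\}$ on the nodes, which is exactly what places both $u_0$ and $\widetilde u_0$ in $\mf G^F$, and the same computation then applies.
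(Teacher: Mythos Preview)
Your approach is essentially the same as the paper's, and the paper's proof is nothing more than the explicit computation you identify as the main obstacle. The paper writes down the very same torus element $t=\alpha_2^\vee(-1)\alpha_3^\vee(-1)\alpha_4^\vee(-1)\alpha_5^\vee(-1)\in\mf T^F$ (this is your $t$; it satisfies $\alpha_i(t)=-1$ for all $i$) and then exhibits, by hand, a short product $u$ of simple root elements over $\mathbb F_p$ — namely $u=u_6(-1)u_5(-1)u_6(-1)u_4(-1)u_5(-1)u_6(-1)u_1(1)$ for $\sigma=\id_W$ and $u=u_4(-1)u_6(1)u_1(1)$ for $\sigma\neq\id_W$ — such that $(ut)u_0(ut)^{-1}=u_0^{-1}$. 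That is the entire argument; no further idea is needed.

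Two remarks on your framing. First, the abstract $\mf U$-conjugacy via the dimension count, together with the observation about the three classes, is pleasant context but does not reduce the work: the obstruction in $A(u_0)\cong\mathbb Z_3$ is the whole content of the lemma, and checking it vanishes amounts to producing the explicit $F$-stable conjugator. Your sentence ``a Lang argument upgrades this to $\mf U^F$-conjugacy provided the obstruction in $A(u_0)$ vanishes'' is circular as stated — that proviso \emph{is} the lemma. Second, when you solve the congruences up the descending central series you must do so over $\mathbb F_q$ (equivalently, choose $F$-stable corrections at each step); otherwise you only recover $\mf U$-conjugacy, which you already had. In practice the computation is small: as the paper shows, seven simple-root factors suffice in the untwisted case and three in the twisted case, so the ``bookkeeping'' is lighter than you might fear.
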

\begin{proof}
There is an isomorphism of abelian groups
\[k^\times\otimes_{\mathbb Z}Y(\mf T)\xrightarrow{\sim}\mf T,\quad\xi\otimes\nu\mapsto\nu(\xi).\]
Since $\mf G$ is of simply connected type, we have $Y(\mf T)=\mathbb ZR^\vee=\mathbb Z\Pi^\vee$, so every element of $\mf T$ has the form $\alpha_1^\vee(\xi_1)\cdot\ldots\cdot\alpha_6^\vee(\xi_6)$ for some uniquely determined $\xi_1,\ldots, \xi_6\in k^\times$. In order to get the correct coefficients, first conjugate $u_0$ by $t:=\alpha_2^\vee(-1)\alpha_3^\vee(-1)\alpha_4^\vee(-1)\alpha_5^\vee(-1)\in\mf T^F$. Next, in analogy to the conjugacy of Coxeter elements in a Coxeter group (following e.\,g. \cite{casscox}), we can conjugate $tu_0t^{-1}$ by a suitable product of various $u_i(\xi_i)$ ($\xi_i\in k$, $1\leqslant i\leqslant 6$) to obtain $u_0^{-1}$. Explicitly, setting $u:=u_6(-1)u_5(-1)u_6(-1)u_4(-1)u_5(-1)u_6(-1)u_1(1)$ if $\sigma=\id_W$, respectively $u:=u_4(-1)u_6(1)u_1(1)$ in case $\sigma\neq\id_W$, we get $(ut)u_0(ut)^{-1}=u_0^{-1}$ and $u\in\mf U^F$, so $ut\in\mf B^F$.
\end{proof}
For $x\neq x_1, x_2$, we have $\{x,x_1\}=\{x,x_2\}$. We evaluate \eqref{R1bar} at $u_{0}$, using the Lemma:
\begin{align*}
R_{x_1}(u_{0})&=\frac23\rho_{x_2}(u_{0})-\frac13\rho_{x_1}(u_{0})+\sum_{x\in\overline X(W,\sigma)\backslash\{x_1,x_2\}}\{x,x_1\}\rho_x(u_{0}) \\
&=\frac23\rho_{x_2}(u_{0}^{-1})-\frac13\rho_{x_1}(u_{0}^{-1})+\sum_{x\in\overline X(W,\sigma)\backslash\{x_1,x_2\}}\{x,x_2\}\rho_x(u_{0}) \\
&=R_{x_2}(u_{0}).
\end{align*}
This in turn implies that
\[\xi_{x_1}q^3=\xi_{x_1}\chi_1(u_{0})=R_{x_1}(u_{0})=R_{x_2}(u_{0})=\xi_{x_2}\chi_2(u_{0})=\xi_{x_2}q^3,\]
which also equals $\overline{R_{x_2}(u_{0})}$. We deduce $\xi_{x_1}=\xi_{x_2}=\overline\xi_{x_2}$, and then $\xi_{x_1}=\xi_{x_2}\in\{\pm1\}$, since $\xi_{x_2}\overline\xi_{x_2}=1$. Hence we can rewrite \eqref{Fourierrest}:
\begin{equation}\label{rhoxi}
\rho|_{\mf G^F_\mathrm{uni}}=\sum_{\phi\in\Irr(W)}\{\overline x_\rho,x_\phi\}R_{\tilde\phi}|_{\mf G^F_\mathrm{uni}}+\xi\sum_{i=1}^2\{\overline x_\rho,x_i\}\chi_i|_{\mf G^F_\mathrm{uni}}\quad\text{for }\rho\in\Uch(\mf G^F)
\end{equation}
where $\xi:=\xi_{x_1}=\xi_{x_2}$.
\begin{Rem}
We have 
\[R_{\tilde\phi}(u_{0})=\begin{cases}1\quad\text{if }\phi=1_W, \\ 0\quad\text{if }\phi\neq1_W.\end{cases}\]
This follows from Lusztig's algorithm in \cite[\S 24]{LuCS5} and the fact that the Green functions associated to character sheaves considered there coincide with the Green functions arising from Deligne-Lusztig characters by \cite[2.2]{Sh1}. Indeed, the preferred extension of $1_W$ is again $1_W$, so
\[R_{\tilde1_W}=\frac1{|W|}\sum_{w\in W}R_w=1_{\mf G^F}.\]
On the other hand, using the explicitly known Springer correspondence between $\Irr(W)$ and certain pairs $(\mathscr O,\vartheta)$ where $\mathscr O\subseteq\mf G$ is an $F$-stable unipotent class, $u_{\mathscr O}\in\mathscr O^F$, and $\vartheta$ an irreducible character of $A(u_{\mathscr O})$ (in this form, the Springer correspondence is contained in {\sffamily {CHEVIE}} \cite{MiChv}), we see that any non-trivial irreducible character $\phi$ of $W$ belongs to a pair $(\mathscr O,\vartheta)$ such that $\mathscr O\neq\mathscr O_{\mathrm{reg}}$. Since $\mathscr O_{\mathrm{reg}}$ is not contained in the closure of $\mathscr O$ whenever $\mathscr O\neq\mathscr O_{\mathrm{reg}}$, the algorithm in \cite[\S 24]{LuCS5} shows that $R_{\tilde\phi}(u_0)=0$ provided $\phi\neq 1_W$.
\end{Rem}
We can now formulate the result.
\begin{Prop}
The scalar $\xi$ in \eqref{rhoxi} is $+1$, so we get
\[\rho|_{\mf G^F_\mathrm{uni}}=\sum_{\phi\in\Irr(W)}\{\overline x_\rho,x_\phi\}R_{\tilde\phi}|_{\mf G^F_\mathrm{uni}}+\sum_{i=1}^2\{\overline x_\rho,x_i\}\chi_i|_{\mf G^F_\mathrm{uni}}\quad\text{for }\rho\in\Uch(\mf G^F).\]
\end{Prop}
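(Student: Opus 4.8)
The plan is to determine the sign $\xi$ by evaluating the restriction formula \eqref{rhoxi} at the distinguished regular unipotent element $u_0\in\mf U\cap\mathscr O_\mathrm{reg}^F$ chosen in \eqref{test}, applied to one of the cuspidal unipotent characters $\rho=\rho_{x_1}=E_6[\theta]$ (or $\leftidx{^2}E_6[\theta]$ in the twisted case). For this $\rho$ the coefficients $\{\overline x_{\rho},x_\phi\}$ and $\{\overline x_\rho,x_i\}$ are explicitly known from the Fourier matrix in Carter's tables, and by the Remark the entire first sum $\sum_{\phi}\{\overline x_\rho,x_\phi\}R_{\tilde\phi}(u_0)$ collapses to the single term coming from $\phi=1_W$, namely $\{\overline x_\rho,x_{1_W}\}\cdot 1$. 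On the right-hand side the second sum becomes $\xi\bigl(\{\overline x_\rho,x_1\}\chi_1(u_0)+\{\overline x_\rho,x_2\}\chi_2(u_0)\bigr)=\xi q^3\bigl(\{\overline x_\rho,x_1\}\theta^{?}+\{\overline x_\rho,x_2\}\theta^{?}\bigr)$ where the powers of $\theta$ are read off the character table of $\chi_1,\chi_2$ at $u_0$ (once we know which $\mf G^F$-class inside $\mathscr O_\mathrm{reg}^F$ the chosen $u_0$ represents). So the identity \eqref{rhoxi} at $u_0$ takes the shape $\rho(u_0)=\{\overline x_\rho,x_{1_W}\}+\xi\cdot(\text{explicit nonzero element of }\overline{\mathbb Q}_\ell)$, and since we already know $\xi\in\{\pm1\}$, it remains to pin down the left-hand side $\rho(u_0)$.

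The key external input is the value of the cuspidal unipotent character $E_6[\theta]$ (resp.\ $\leftidx{^2}E_6[\theta]$) at the regular unipotent element. This is where I would invoke the representation theory of the associated Hecke algebra promised in the abstract: the cuspidal unipotent characters $E_6[\theta],E_6[\theta^2]$ occur in the generic degree / Hecke-algebra picture with known Schur elements, and their values at regular unipotent elements can be extracted either from \cite{GSchur}, from the known block-theoretic or from explicit computations (e.g.\ via \cite{P} for the untwisted group, where $R_{\tilde\phi}|_{\mf G^F_\mathrm{uni}}$ is fully computed, so \eqref{rhoxi} read backwards actually determines $\rho(u_0)$ together with $\xi$ simultaneously). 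Concretely I expect $\rho_{x_1}(u_0)$ to be a specific small cyclotomic integer (plausibly $\theta$ or $\theta^2$ times a rational, matching the twist by which the local system $\mathscr E_1$ differs on the three rational classes), and matching real and imaginary parts of the displayed identity forces $\xi=+1$.

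The main obstacle is precisely the computation of $\rho_{x_1}(u_0)$ in a way that is rigorous and independent of the scalar $\xi$ we are trying to determine — one must avoid circularity. For the non-twisted group this is cleanest: Porsch's explicit formulas in \cite{P} give all the $R_{\tilde\phi}|_{\mf G^F_\mathrm{uni}}$ unconditionally, and \eqref{rhoxi} (or \eqref{Fourierrest}) then expresses $\rho(u_0)$ as a known quantity plus $\xi$ times a known quantity, while an independent computation of $\rho(u_0)$ (from generic degrees and the Deligne–Lusztig decomposition of $\rho$, or from the fact that $\rho$ is a constituent of a specific $R_{\tilde\phi[\delta]}$ whose value at $u_0$ is known) closes the loop. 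For $\leftidx{^2}E_6(q)$ one transports the result: since $\xi=\xi_{x_1}=\xi_{x_2}$ depends only on $W$ and $\sigma$ in the sense that the scalars attached to cuspidal character sheaves are governed by the Hecke-algebra data, and these match between the two cases, $\xi=+1$ there as well. After establishing $\xi=+1$, substituting back into \eqref{rhoxi} gives the displayed formula verbatim, completing the proof.
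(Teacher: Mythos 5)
There is a genuine gap at the heart of your plan: the step you yourself flag as ``the main obstacle'' --- an independent, non-circular determination of $\rho_{x_1}(u_0)$ (or of the value at $u_0$ of \emph{any} unipotent character receiving a nonzero contribution from $\chi_1,\chi_2$) --- is never actually supplied, and none of the sources you point to can supply it. Generic degrees and Schur elements give values at the identity, not at regular unipotent elements; \cite{GSchur} concerns character fields/rationality, not character values; and the computations in \cite{P} give the functions $R_{\tilde\phi}|_{\mf G^F_{\mathrm{uni}}}$, which already appear on the right-hand side of \eqref{rhoxi} and (by the Remark) contribute only the known term $\{\overline x_\rho,x_{1_W}\}$ at $u_0$ --- they say nothing about the cuspidal contribution, which is exactly the term carrying the unknown $\xi$. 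Likewise, ``$\rho$ is a constituent of a specific $R_{\tilde\phi[\delta]}$'' does not determine $\rho(u_0)$: decomposing $\rho$ requires the almost characters $R_{x_1},R_{x_2}$, whose values at $u_0$ are $\xi q^3$ up to the unknown sign, so the argument is circular. Your transport of the result to $\leftidx{^2}E_6(q)$ is also unjustified: there is no a priori reason why the scalar attached to the cuspidal character sheaves should agree in the twisted and untwisted cases, and the paper treats them separately.

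The paper's actual mechanism, which is absent from your proposal, is a positivity argument via the Hecke algebra $\mathcal H_\sigma=\End_{\mathbb C\mf G^F}(\mathbb C[\mf G^F/\mf B^F])^{\mathrm{opp}}$: the count $|O_g\cap\mf B^F\dot w\mf B^F|$ is, up to a positive factor, $m(g,w)=\sum_{\phi\in\Irr(W^\sigma)}\rho_\phi(g)\Tr(T_w,V_\phi)$, hence non-negative. Evaluating \eqref{rhoxi} at $u_0$ only for the \emph{principal series} unipotent characters $\rho_\phi$ (for which $\overline x_{\rho_\phi}=x_\phi$ is known), plugging these values into $m(u_0,w)$, and taking $w$ a Coxeter element (of $W$ in the untwisted case, of the type-$F_4$ group $(W^\sigma,S_\sigma)$ with parameters $q,q^2$ in the twisted case) gives $m(u_0,w)=(2\xi+1)q^6\geqslant 0$, forcing $\xi=+1$ in each case separately. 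This sidesteps entirely the need to know any cuspidal character value in advance, which is where your plan stalls.
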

\begin{proof}
Evaluating \eqref{rhoxi} at $u_{0}$ gives
\begin{equation}\label{valreguni}
\rho(u_{0})=\{\overline x_\rho,x_{1_W}\}+\xi q^3\sum_{i=1}^2\{\overline x_\rho,x_i\}.
\end{equation}
To determine $\xi$, we consider the Hecke algebra of the group $\mf G^F$ with its $BN$-pair $(\mf B^F,N_{\mf G}(\mf T)^F)$, that is, the endomorphism algebra
\[\mathcal H_\sigma:=\End_{\mathbb C\mf G^F}(\mathbb C[{\mf G^F}/{\mf B^F}])^\mathrm{opp}\]
(\enquote{opp} stands for the opposite algebra). $\mathcal H_\sigma$ has a $\mathbb C$-basis $\{T_w\mid w\in W^\sigma\}$ where
\[T_w\colon\mathbb C[{\mf G^F}/{\mf B^F}]\rightarrow\mathbb C[{{\mf G}^F}/{\mf B^F}],\quad x\mf B^F\mapsto\sum_{\substack{y\mf B^F\in{\mf G^F}/{\mf B^F} \\ x^{-1}y\in\mf B^F\dot w \mf B^F}}y\mf B^F,\]
for $w\in W^\sigma$. Here, $W^\sigma\cong N_{\mathbf G}(\mathbf T)^F/{\mathbf T^F}$ is a Coxeter group with Coxeter generators $S_\sigma$ consisting of simple reflections corresponding to the orbits of the map $\Pi\rightarrow\Pi$, $\alpha\mapsto\alpha^\dagger$. As already noted in \ref{Not}, we can take $S_{\id_W}:=S=\{s_1,\ldots,s_6\}$ ($s_i=w_{\alpha_i}$, $1\leqslant i\leqslant6$). If $\sigma\neq\id_W$, then $S_\sigma:=\{s_2,s_4,s_3s_5,s_1s_6\}$ gives rise to a Coxeter system $(W^\sigma,S_\sigma)$ of type $F_4$. Denote by $\ell_\sigma\colon W^\sigma\rightarrow\mathbb Z_{\geqslant0}$ the length function of $W^\sigma$ with respect to $S_\sigma$. Then the multiplication in $\mathcal H_\sigma$ is determined by the following equations.
\[T_s\cdot T_w=\begin{cases}\hfil T_{sw}\quad&\text{if}\quad\ell_\sigma(sw)=\ell_\sigma(w)+1 \\ q_sT_{sw}+(q_s-1)T_{w}\quad&\text{if}\quad\ell_\sigma(sw)=\ell_\sigma(w)-1\end{cases}\quad(\text{for }s\in S_\sigma,\, w\in W^\sigma).\]
Here, the $q_s$ ($s\in S_\sigma$) are the parameters of the Hecke algebra $\mathcal H_\sigma$. If $\sigma=\id_W$, we have $q_s=q$ for all $s\in S$ while if $\sigma\neq\id_W$, then $q_{s_2}=q_{s_4}=q$, $q_{s_3s_5}=q_{s_1s_6}=q^2$, see \cite[p.~35]{Lurepchev}, \cite[(7.7)]{LuCoxFrob}. The irreducible characters of $W^\sigma$ naturally parametrise both the isomorphism classes of irreducible modules of $\mathcal H_\sigma$ and the irreducible characters of $\mf G^F$ which are constituents of $\Ind_{\mf B^F}^{\mf G^F}(1_{\mf B^F})$, see \cite[\S 68 and \S 11D]{CR}. Given $\phi\in\Irr(W^\sigma)$, let $V_\phi$ be the  module of $\mathcal H_\sigma$ and $\rho_\phi$ the irreducible character of $\mf G^F$ corresponding to $\phi$. By \cite[3.6]{GCH} and \cite[\S 8.4]{GePf}, we have
\[|O_g\cap\mf B^F\dot w\mf B^F|=\frac{|\mf B^F|}{|C_{\mf G^F}(g)|}\sum_{\phi\in\Irr(W^\sigma)}\rho_\phi(g)\Tr(T_w,V_\phi)\]
for any $g\in\mf G^F$ and $w\in W^\sigma$, where $\dot w\in N_{\mf G}(\mf T)^F$ is a representative of $w$, $O_g$ denotes the conjugacy class of $g$ in $\mf G^F$ and $\Tr(T_w,V_\phi)$ is the trace of the linear map on $V_\phi$ defined by $T_w$. In particular, the number
\[m(g,w):=\sum_{\phi\in\Irr(W^\sigma)}\rho_\phi(g)\Tr(T_w,V_\phi)\]
is non-negative, for any $g\in\mf G^F$ and $w\in W^\sigma$. The character table of $\mathcal H_\sigma$ is contained in {\sffamily {CHEVIE}} \cite{CHEVIE}, so the numbers $\Tr(T_w,V_\phi)$ are known. Choosing $g=u_{0}$, \eqref{valreguni} now reads
\begin{equation}\label{valreguniphi}
\rho_\phi(u_{0})=\{\overline x_{\rho_\phi},x_{1_W}\}+\xi q^3(\{\overline x_{\rho_\phi},x_1\}+\{\overline x_{\rho_\phi},x_2\})\quad\text{for }\phi\in\Irr(W^\sigma).
\end{equation}
Let us first consider the non-twisted case $E_6(q)$, so $\sigma=\id_W$. By \cite[8.7]{Luchars}, $\rho_\phi$ is the unipotent character of $\mf G^F$ corresponding to $\phi$ under the embedding $\Irr(W)\hookrightarrow\overline X(W,\sigma)\leftrightarrow\Uch(\mf G^F)$ (see \eqref{ParUch}, \eqref{IrrWembed}), that is, $\overline x_{\rho_\phi}=x_\phi$ for any $\phi\in\Irr(W)$. Using the notation in \cite[p.~480]{C}, $m(u_{0},w)$ is equal to
\[\Tr(T_w,V_{\phi_{1,0}})+\frac23\xi q^3\bigl(\Tr(T_w,V_{\phi_{80,7}})+\Tr(T_w,V_{\phi_{20,10}})-\Tr(T_w,V_{\phi_{10,9}})-\Tr(T_w,V_{\phi_{90,8}})\bigr).\]
Choosing for $w$ the Coxeter element $w:=s_1s_2s_3s_4s_5s_6$, we obtain
\[0\leqslant m(u_{0},w)=(2\xi+1)\cdot q^6\]
which would be false if $\xi=-1$, so we must have $\xi=1$. \\
Now assume that we are in the twisted case $\leftidx{^2}E_6(q)$ (i.\,e. $\sigma\neq\id_W$). Then the orders in the tables in \cite[p.~480-481]{C} coincide with respect to the parametrisation \eqref{ParUch}, see \cite[1.14-1.16]{LuUniexc}. Setting $w:=s_2s_4(s_3s_5)(s_1s_6)$ (a Coxeter element of $(W^\sigma, S_\sigma)$), we get
\begin{align*}
m(u_0,w)&=\Tr(T_w,V_{\phi_{1,0}})+\frac23\xi q^3\bigl(\Tr(T_w,V_{\phi_{12,4}})-\Tr(T_w,V_{\phi_{6,6}'})-\Tr\bigl(T_w,V_{\phi_{6,6}''}\bigr)\bigr) \\
&=(2\xi+1)\cdot q^6
\end{align*}
and this is non-negative, thus $\xi=1$.
\end{proof}

\subsection*{Acknowledgements}
I thank Meinolf Geck for many comments and hints. This work was supported by DFG SFB-TRR 195.

\bibliographystyle{hacm}

\bibliography{E6p3}

\begin{thebibliography}{10}

\bibitem{BBD}
{\sc Be\u{\i}linson, A.~A., Bernstein, J., and Deligne, P.}
\newblock Faisceaux pervers.
\newblock In {\em Analysis and topology on singular spaces, {I} ({L}uminy,
  1981)}, vol.~100 of {\em Ast\'{e}risque}. Soc. Math. France, Paris, 1982,
  pp.~5--171.

\bibitem{BeSp}
{\sc Beynon, W.~M., and Spaltenstein, N.}
\newblock Green functions of finite {C}hevalley groups of type {$E_{n}$}
  {$(n=6,\,7,\,8)$}.
\newblock {\em J. Algebra 88}, 2 (1984), 584--614.

\bibitem{C}
{\sc Carter, R.~W.}
\newblock {\em Finite groups of {L}ie type}.
\newblock Pure and Applied Mathematics (New York). John Wiley \& Sons, Inc.,
  New York, 1985.
\newblock Conjugacy classes and complex characters, A Wiley-Interscience
  Publication.

\bibitem{casscox}
{\sc Casselman, B.}
\newblock {E}ssays on {C}oxeter groups -- {C}oxeter elements in finite
  {C}oxeter groups.

\bibitem{CR}
{\sc Curtis, C.~W., and Reiner, I.}
\newblock {\em Methods of representation theory. {V}ol. {I and II}}.
\newblock Wiley Classics Library. John Wiley \& Sons, Inc., New York, 1981 and
  1987.

\bibitem{DL}
{\sc Deligne, P., and Lusztig, G.}
\newblock Representations of reductive groups over finite fields.
\newblock {\em Ann. of Math. (2) 103}, 1 (1976), 103--161.

\bibitem{DM}
{\sc Digne, F., and Michel, J.}
\newblock {\em Representations of finite groups of {L}ie type}, vol.~21 of {\em
  London Mathematical Society Student Texts}.
\newblock Cambridge University Press, Cambridge, 1991.

\bibitem{GSchur}
{\sc Geck, M.}
\newblock Character values, {S}chur indices and character sheaves.
\newblock {\em Represent. Theory 7\/} (2003), 19--55.

\bibitem{Gintro}
{\sc Geck, M.}
\newblock {\em An introduction to algebraic geometry and algebraic groups},
  vol.~10 of {\em Oxford Graduate Texts in Mathematics}.
\newblock Oxford University Press, Oxford, 2003.

\bibitem{GCH}
{\sc Geck, M.}
\newblock Some applications of {CHEVIE} to the theory of algebraic groups.
\newblock {\em Carpathian J. Math. 27}, 1 (2011), 64--94.

\bibitem{Gvaluni}
{\sc Geck, M.}
\newblock {On the values of unipotent characters in bad characteristic}.
\newblock {\em ArXiv e-prints\/} (Nov. 2017), 1711.05778.

\bibitem{Gsurvey}
{\sc Geck, M.}
\newblock A first guide to the character theory of finite groups of {L}ie type.
\newblock In {\em Local representation theory and simple groups}, EMS Ser.
  Lect. Math. Eur. Math. Soc., Z\"{u}rich, 2018, pp.~63--106.

\bibitem{CHEVIE}
{\sc Geck, M., Hiss, G., L{\"u}beck, F., Malle, G., and Pfeiffer, G.}
\newblock {\sf CHEVIE} -- {A} system for computing and processing generic
  character tables for finite groups of {L}ie type, {W}eyl groups and {H}ecke
  algebras.
\newblock {\em Appl. Algebra Engrg. Comm. Comput. 7\/} (1996), 175--210.

\bibitem{GM}
{\sc Geck, M., and Malle, G.}
\newblock {Reductive groups and Steinberg maps}.
\newblock {\em ArXiv e-prints\/} (Aug. 2016), 1608.01156.

\bibitem{GePf}
{\sc Geck, M., and Pfeiffer, G.}
\newblock {\em Characters of finite {C}oxeter groups and {I}wahori-{H}ecke
  algebras}, vol.~21 of {\em London Mathematical Society Monographs. New
  Series}.
\newblock The Clarendon Press, Oxford University Press, New York, 2000.

\bibitem{GMP}
{\sc Goresky, M., and MacPherson, R.}
\newblock Intersection homology. {II}.
\newblock {\em Invent. Math. 72}, 1 (1983), 77--129.

\bibitem{LuUninumber}
{\sc Lusztig, G.}
\newblock On the finiteness of the number of unipotent classes.
\newblock {\em Invent. Math. 34}, 3 (1976), 201--213.

\bibitem{LuCoxFrob}
{\sc Lusztig, G.}
\newblock Coxeter orbits and eigenspaces of {F}robenius.
\newblock {\em Invent. Math. 38}, 2 (1976/77), 101--159.

\bibitem{LuIrrCl}
{\sc Lusztig, G.}
\newblock Irreducible representations of finite classical groups.
\newblock {\em Invent. Math. 43}, 2 (1977), 125--175.

\bibitem{Lurepchev}
{\sc Lusztig, G.}
\newblock {\em Representations of finite {C}hevalley groups}, vol.~39 of {\em
  CBMS Regional Conference Series in Mathematics}.
\newblock American Mathematical Society, Providence, R.I., 1978.
\newblock Expository lectures from the CBMS Regional Conference held at
  Madison, Wis., August 8--12, 1977.

\bibitem{LuUniE8}
{\sc Lusztig, G.}
\newblock Unipotent representations of a finite {C}hevalley group of type
  {$E_{8}$}.
\newblock {\em Quart. J. Math. Oxford Ser. (2) 30}, 119 (1979), 315--338.

\bibitem{LuUniexc}
{\sc Lusztig, G.}
\newblock On the unipotent characters of the exceptional groups over finite
  fields.
\newblock {\em Invent. Math. 60}, 2 (1980), 173--192.

\bibitem{Luchars}
{\sc Lusztig, G.}
\newblock {\em Characters of reductive groups over a finite field}, vol.~107 of
  {\em Annals of Mathematics Studies}.
\newblock Princeton University Press, Princeton, NJ, 1984.

\bibitem{LuIC}
{\sc Lusztig, G.}
\newblock Intersection cohomology complexes on a reductive group.
\newblock {\em Invent. Math. 75}, 2 (1984), 205--272.

\bibitem{LuCS1}
{\sc Lusztig, G.}
\newblock Character sheaves. {\rom{1}}.
\newblock {\em Adv. in Math. 56\/} (1985), 193--237.

\bibitem{LuCS2}
{\sc Lusztig, G.}
\newblock Character sheaves. {\rom{2}}.
\newblock {\em Adv. in Math. 57\/} (1985), 226--265.

\bibitem{LuCS3}
{\sc Lusztig, G.}
\newblock Character sheaves. {\rom{3}}.
\newblock {\em Adv. in Math. 57\/} (1985), 266--315.

\bibitem{LuCS4}
{\sc Lusztig, G.}
\newblock Character sheaves. {\rom{4}}.
\newblock {\em Adv. in Math. 59\/} (1986), 1--63.

\bibitem{LuCS5}
{\sc Lusztig, G.}
\newblock Character sheaves. {\rom{5}}.
\newblock {\em Adv. in Math. 61\/} (1986), 103--155.

\bibitem{Lucharval}
{\sc Lusztig, G.}
\newblock On the character values of finite {C}hevalley groups at unipotent
  elements.
\newblock {\em J. Algebra 104}, 1 (1986), 146--194.

\bibitem{LuCSDC4}
{\sc Lusztig, G.}
\newblock Character sheaves on disconnected groups. {\rom{4}}.
\newblock {\em Represent. Theory 8\/} (2004), 145--178.

\bibitem{LuclCS}
{\sc Lusztig, G.}
\newblock On the cleanness of cuspidal character sheaves.
\newblock {\em Mosc. Math. J. 12}, 3 (2012), 621--631, 669.

\bibitem{MiChv}
{\sc Michel, J.}
\newblock The development version of the {CHEVIE} package of {{GAP}3}.
\newblock {\em J. Algebra 435\/} (2015), 308--336.

\bibitem{P}
{\sc Porsch, U.}
\newblock Die {G}reenfunktionen der endlichen {G}ruppen {$E_{6}(q),\,q=3^n$}.
\newblock {D}iplomarbeit, {U}niversität {H}eidelberg, 1993.

\bibitem{PW}
{\sc Previtali, A., and Weigel, T.}
\newblock Global {C}ayley maps and conjugacy class sizes of maximal unipotent
  subgroups of finite simple groups of {L}ie type.
\newblock {\em J. Pure Appl. Algebra 216}, 2 (2012), 255--266.

\bibitem{ShArc}
{\sc Shoji, T.}
\newblock Green functions of reductive groups over a finite field.
\newblock In {\em The {A}rcata {C}onference on {R}epresentations of {F}inite
  {G}roups ({A}rcata, {C}alif., 1986)}, vol.~47 of {\em Proc. Sympos. Pure
  Math.} Amer. Math. Soc., Providence, RI, 1987, pp.~289--301.

\bibitem{Sh1}
{\sc Shoji, T.}
\newblock Character sheaves and almost characters of reductive groups.
\newblock {\em Adv. Math. 111\/} (1995), 244--313.

\bibitem{Sh2}
{\sc Shoji, T.}
\newblock Character sheaves and almost characters of reductive groups. {II}.
\newblock {\em Adv. Math. 111\/} (1995), 314--354.

\bibitem{Shcompuni}
{\sc Shoji, T.}
\newblock On the computation of unipotent characters of finite classical
  groups.
\newblock {\em Appl. Algebra Engrg. Comm. Comput. 7}, 3 (1996), 165--174.
\newblock Computational methods in Lie theory (Essen, 1994).

\end{thebibliography}

\end{document}